\newtheorem{thm}{Theorem}[section]
\newtheorem{prop}[thm]{Proposition}
\newtheorem{coro}[thm]{Corollary}
\newtheorem{lem}[thm]{Lemma}
\newtheorem{rmk}[thm]{Remark}
\numberwithin{equation}{section}
\newcommand{\N}{{\mathbb{N}}}
\newcommand{\z}{{\mathbb{Z}}}
\newcommand{\floor}[1]{\left\lfloor #1 \right\rfloor}
\newcommand{\ceil}[1]{\left\lceil #1 \right\rceil}
\newcommand{\eq}{\equiv}
\title[Determining universality of $m$-gonal form]{Determining universality of $m$-gonal form with first five coefficients}
\author{Dayoon Park}
\address{Department of Mathematics, The University of Hong Kong, Hong Kong}
\email{pdy1016@hku.hk}
\thanks{}
\begin{document}
\maketitle 

\begin{abstract}
In this paper, we classify the $(a_1,a_2,a_3,a_4,a_5)$ for which the universality of an $m$-gonal form $F_m(\mathbf x)$ whose first $5$ coefficients are $(a_1,a_2,a_3,a_4,a_5)$ is characterized as the representabilitiy of positive integers up to $m-4$ and see its applications.
\end{abstract}

\maketitle
\section{Introduction}
\vskip 0.3cm
For $m \ge 3$ and $x\ge 1$, the {\it x-th $m$-gonal number}
\begin{equation}\label{m-number}P_m(x)=\frac{m-2}{2}x^2-\frac{m-4}{2}x \end{equation}
which is the number of total dots to constitute a regular $m$-gon with $x$ dots for each side was firstly defined in BC 2-th Century.
Since then, many mathematicians have been interested in representation of positive integers by a sum of $m$-gonal numbers.
As a famous example, Fermat conjectured that every positive integer may be written as a sum of at most $m$ $m$-gonal numbers.
Lagrange and Gauss proved his conjecture for $m=4$ and $m=3$ in 1770 and 1796, respectively.
And finally Cauchy proved his conjecture for all $m \ge 3$ in 1813.

By admitting for variable $x$ in (\ref{m-number}) zero and negative integer too, we can naturally generalize the $m$-gonal number.
Actually, the author and collaborators \cite{KP} showed that for $m \ge 10$, $m-4$ is the optimal number for generalized Fermat's Conjecture, i.e., every positive integer may be written as a sum of $m-4$ generalized $m$-gonal numbers.
In order to more generally consider the representation of positive integers by $m$-gonal numbers,
we call a sum of weighted $m$-gonal numbers 
\begin{equation} \label{m form}
F_m(\mathbf x)=a_1P_m(x_1)+\cdots+a_nP_m(x_n)
\end{equation}
where $a_i \in \N$ an {\it $m$-gonal form}.
In this paper, without loss of generality, we always assume that 
\begin{equation}\label{ass}
a_1 \le \cdots \le a_n
\end{equation}
in (\ref{m form}).
For a positive integer $N \in \N$, if 
$$F_m(\mathbf x)=N$$ has an integer solution $\mathbf x \in \z^n$, then we say that the $m$-gonal form
{\it $F_m(\mathbf x)$ represents $N$} and if $F_m(\mathbf x)$ represents every positive integer, then we call $F_m(\mathbf x)$ is {\it universal}.

The universal forms has been paid attention by many mathematicians.
In the mid $19$-th Century, Liouville classified all universal $3$-gonal forms.
At the beginning of the $20$-th Century, Ramanujan found all universal $4$-gonal forms (i.e., diagonal quadratic forms).
In fact, Ramanujan found $55$ forms but one of them is not actually universal.

In effect, to determine whether a form is universal does not seem to be easy.
On the other hand, in 1993, Conway and Schneeberger announced a stunning result on determining the universality of quadratic form.
Well known as {\it $15$-theorem} \cite{15} the result states that the representability of positive integers up to only $15$ characterize the universality of a quadratic form. 
This simple criterion makes to check very easily the universality of classic quadratic forms.
Kane and Liu \cite{BJ} showed that such a finiteness theorem holds for any $m$-gonal form, i.e., there is (unique and minimal) $\gamma_m$ for which an $m$-gonal form is universal if it represents every positive integer up to $\gamma_m$.
Since the smallest (generalized) $m$-gonal number except $0$ and $1$ is $m-3$, the $\gamma_m$ would be obviously greater than or equal to $m-4$.
In \cite{BJ}, they also firstly questioned about the growth of asymptotically increasing $\gamma_m$ and proved that
$$m-4 \le \gamma_m \ll m^{7+\epsilon}.$$ 
Kim and the author \cite{KP'} optimally improved their result by showing the growth of $\gamma_m$ is exactly linear on $m$, i.e., by proving the following theorem.

\begin{thm} \label{c}
There exists an absolute constant $C>0$ such that $$m-4 \le \gamma_m \le C(m-2)$$
for any $m \ge 3$.
\end{thm}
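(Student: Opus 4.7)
The inequality $m-4 \le \gamma_m$ is witnessed by the explicit non-universal form $G_m(\mathbf x)=P_m(x_1)+\cdots+P_m(x_{m-5})$ in $m-5$ variables. For each $N\in\{1,\ldots,m-5\}$, setting $N$ of the $x_i$ equal to $1$ and the rest to $0$ gives $G_m=N$. On the other hand, $G_m$ cannot represent $m-4$: the nonzero generalized $m$-gonal values other than $1$ are all at least $m-3$, so a sum equal to $m-4$ would need either more than $m-5$ ones or a single term already $\ge m-3>m-4$ with the others nonnegative. Hence $G_m$ represents $1,\ldots,m-5$ but not $m-4$, and $\gamma_m\ge m-4$.

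\textbf{Upper bound.} For $\gamma_m \le C(m-2)$ my plan is a two-stage argument: an escalator reduction, followed by an effective representation theorem. Suppose $F_m(\mathbf x)=\sum_{i=1}^n a_iP_m(x_i)$ represents every integer in $[1,C(m-2)]$ for an absolute constant $C$ to be chosen. Since the only generalized $m$-gonal numbers in $[0,m-4]$ are $\{0,1\}$, representing each small integer $N$ forces $\sum_{i\in I}a_i=N$ for some index set $I$; applying this inductively pins $a_1=1$ and constrains each subsequent $a_i$ to a finite set independent of $m$. Iterating finitely many times isolates a sub-form $F_m^{(k)}(\mathbf x)=\sum_{i=1}^k a_iP_m(x_i)$ of bounded shape with $k$ absolutely bounded, which I would take large enough (say $k\ge 5$) so that the associated quadratic form below is non-exceptional. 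It remains to show $F_m^{(k)}$ represents every $N\gg m-2$. Via the substitution $y_i=2(m-2)x_i-(m-4)$ and the identity $8(m-2)P_m(x)+(m-4)^2=(2(m-2)x-(m-4))^2$, this is equivalent to representing $M=8(m-2)N+(m-4)^2\sum_{i=1}^k a_i$ by the diagonal quadratic form $Q(\mathbf y)=\sum_{i=1}^k a_iy_i^2$ subject to $y_i\equiv -(m-4)\pmod{2(m-2)}$. A standard theta-series decomposition writes the counting function as an Eisenstein main term, comparable to a product of local densities of size $\gg M^{(k-2)/2}$, plus a cusp-form error controlled by the Deligne--Petersson bound; checking positivity of the local densities along the given progression (using that $a_1=1$ already solves the local problems at all primes), the main term dominates once $N\gg m-2$, which gives universality and hence $\gamma_m\le C(m-2)$.

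\textbf{Main obstacle.} The technical crux is \emph{uniformity in $m$}: the level of the theta series is essentially $2(m-2)$, so both the Fourier coefficient estimate for the cusp component and the local densities carry implicit constants depending on $m$. To obtain the linear bound $C(m-2)$ rather than a worse polynomial (as in the $m^{7+\epsilon}$ estimate of \cite{BJ}), one must show that the ratio of the cusp-form error to the Eisenstein main term is $o(1)$ as soon as $N\gg m-2$; this translates into an effective, linear-in-$m$ bound on the Petersson norm of the cusp-form component together with uniform control of bad-prime local densities at primes dividing $m-2$. I expect this uniform estimate to be the heart of the proof in \cite{KP'}, the escalator bookkeeping being routine once the effective theta-series comparison is in hand.
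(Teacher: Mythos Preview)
The paper does not prove this theorem here; its proof is simply ``See \cite{KP'}.'' Your lower-bound argument is correct (the small-$m$ cases where $m-5\le 0$ are trivial since then $m-4\le 1$). For the upper bound, however, what you have written is a plan, not a proof: you correctly isolate uniformity in $m$ as the crux, but then leave it unresolved. Since this is precisely the obstacle that limited \cite{BJ} to $m^{7+\epsilon}$, your sketch as it stands gives no improvement over Kane--Liu, and hence no proof of the linear bound.

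There is also reason to think the route you outline is not the one taken in \cite{KP'}. Your scheme passes to the quadratic form $\sum a_iy_i^2$ with a congruence $y_i\equiv -(m-4)\pmod{2(m-2)}$, so the level of the relevant theta series grows with $m$; getting uniform control there would require a genuinely new analytic input. By contrast, the device developed in the present paper (end of Section~1 and Lemma~\ref{ml}) writes $N=A(m-2)+B$ and reduces $F_m(\mathbf x)=N$ to the system $\sum a_ix_i=B$, $\sum a_ix_i^2=2A+B$, i.e.\ to representing the binary lattice $[\,\sum a_i,\,B,\,2A+B\,]$ by the fixed diagonal lattice $\langle a_1,\ldots,a_n\rangle$. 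The key feature is that this auxiliary quadratic-form problem has discriminant and level \emph{independent of $m$}; all $m$-dependence sits in the target $2A+B$, where it is harmless. That is how the paper proves its Lemma~\ref{ml} by class-number-one arguments, and it is the natural mechanism for obtaining a bound linear in $m$. If you want to reconstruct the proof of Theorem~\ref{c}, you should replace the congruence-restricted theta series by this binary-lattice reduction.
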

\begin{proof}
See \cite{KP'}.
\end{proof}
Theorem \ref{c} does not offer the specific $C$.
Even though the proof of Theorem \ref{c} in \cite{KP'} is given by constructing $C$, constructed $C$ would irresponsibly huge, making a big difference with the optimal $C$.
And to find exact $\gamma_m$ for each $m$ would not be easy.
So for the $m$ for which $\gamma_m$ is unknown, to determine the universality of an $m$-gonal form would be still tough topic in general.

Now for
\begin{equation}\label{esc}
\begin{cases}a_1=1 \\ a_{i-1} \le a_i \le a_1+\cdots+a_{i-1}+1 \end{cases}
\end{equation}
and $m-4>a_1+\cdots+a_i$,
let 
\begin{equation}
\gamma_{m,(a_1,\cdots,a_i)}
\end{equation} 
be the minimal integer such that 
an $m$-gonal form whose first $i$ coefficients are $(a_1,\cdots,a_i)$
$$F_m(\mathbf x)=a_1P_m(x_1)+\cdots+a_nP_m(x_n)$$
with $a_1\le \cdots \le a_n$ is universal if and only if it represents every positive integer up to $\gamma_{m,(a_1,\cdots,a_i)}$.
Note that the conditions (\ref{esc}) would be essential for the existence of $\gamma_{m,(a_1,\cdots,a_i)}$ under the assumption (\ref{ass}).
Even though, to determine the universality of arbitrary $m$-gonal form would be difficult, if $\gamma_{m,(a_1,\cdots,a_i)}$ is known, then
one may easily determine the universality of an $m$-gonal form having its first $i$ coefficients as $(a_1,\cdots,a_i)$.
By the definition of $\gamma_{m,(a_1,\cdots,a_i)}$, we immediately have that 
$$m-4 \le \gamma_{m,(a_1,\cdots,a_i)} \le \gamma_m.$$
In effect, $\gamma_m$ is strictly greater than $m-4$.
Although $\gamma_m$ is strictly greater than $m-4$, is there $(a_1,\cdots, a_i)$ for which
$$\gamma_{m,(a_1,\cdots,a_i)}=m-4 ? $$
In this paper, we consider $(a_1,a_2,a_3,a_4,a_5)$ for which $$\gamma_{m,(a_1,a_2,a_3,a_4,a_5)}=m-4$$
for any $m-4>a_1+a_2+a_3+a_4+a_5$ (or at least for almost all $m$). 
The result makes determination of the universality of $m$-gonal form $\sum_{k=1}^na_kP_m(x_k)$ with $\gamma_{m,(a_1,a_2,a_3,a_4,a_5)}=m-4$ easy.
Moreover, one may instantly determine the minimal rank of some specific type of $m$-gonal forms to be universal by applying the results.
Actually, in Section 4, as its applications, we fill out some remaining answers in \cite{KP} and deal with another example.

\vskip 0.5em

In this paper, we use the arithmetic theory of quadratic forms.
A  {\it (quadratic) $\z$-lattice} is a $\z$-module $L$ equipped with a quadratic form $Q$ on $L$ with $Q(L) \subseteq \z$ and its corresponding symmetric bilinear form $B(\mathbf x,\mathbf y)=\frac{Q(\mathbf x+\mathbf y)-Q(\mathbf x)-Q(\mathbf y)}{2}$.
Conventionally, we identify a $\z$-lattice $L=\z\mathbf x_1+\cdots+\z \mathbf x_n$ with its Gram-matrix $\begin{pmatrix}B(\mathbf x_i,\mathbf x_j)\end{pmatrix}_{n \times n}$.
In this paper, for convenience of notation, we will use the notation $[Q(\mathbf x_1),B(\mathbf x_1,\mathbf x_2),Q(\mathbf x_2)]$ instead of $\begin{pmatrix}B(\mathbf x_i,\mathbf x_j)\end{pmatrix}_{2 \times 2}$.
On the other words, $[Q(\mathbf x_1),B(\mathbf x_1,\mathbf x_2),Q(\mathbf x_2)]$ identify with the binary quadratic $\z$-lattice $$Q(\mathbf x_1)X^2+2B(\mathbf x_1,\mathbf x_2)XY+Q(\mathbf x_2)Y^2.$$
In order to avoid confusing of the readers, I would like to note that someone may use the different notation $[Q(\mathbf x_1),2B(\mathbf x_1,\mathbf x_2),Q(\mathbf x_2)]$ for $Q(\mathbf x_1)X^2+2B(\mathbf x_1,\mathbf x_2)XY+Q(\mathbf x_2)Y^2$.
When $\begin{pmatrix}B(\mathbf x_i,\mathbf x_j)\end{pmatrix}_{n \times n}$ is diagonal, i.e., $B(\mathbf x_i,\mathbf x_j)=0$ for any $i \not=j$, we write $\left<Q(\mathbf x_1),\cdots,Q(\mathbf x_n)\right>$ instead of $\begin{pmatrix}B(\mathbf x_i,\mathbf x_j)\end{pmatrix}_{n \times n}$.
For $m$-gonal form $a_1P_m(x_1)+\cdots+a_nP_m(\mathbf x)$, we employ the simple notation $\left<a_1,\cdots,a_n\right>_m$.
Any unexplained notation and terminology can be found in \cite{O1} and \cite{O}.

With the aid of the arithmetic theory of quadratic form, in order to verify the representability of a positive integer by an $m$-gonal form, we examine the representability of a binary quadratic form by a diagonal quadratic form.
Note that
\begin{align*}
A(m-2)+B & =a_1P_m(x_1)+\cdots+a_nP_m(x_n) \\
&=\frac{(m-2)}{2}(\sum_{k=1}^na_kx_k^2-\sum_{k=1}^na_kx_k)+\sum_{k=1}^na_kx_k
\end{align*}
would be solvable if
\begin{equation}\label{system}
\begin{cases}
a_1x_1+\cdots+a_nx_n=B\\
a_1x_1^2+\cdots+a_nx_n^2=2A+B
\end{cases}
\end{equation}
is solvable over $\z$.
If (\ref{system}) is solvable over $\z$, then the binary quadratic $\z$-lattice $$[a_1+\cdots+a_n, B, 2A+B]$$ would be represented by the diagonal quadratic $\z$-lattice $$\left<a_1,\cdots,a_n\right>$$
as follows
$$\begin{pmatrix}a_1+\cdots+a_n & B \\ B & 2A+B\end{pmatrix}=
\begin{pmatrix}1 & 1 & \cdots & 1 \\ x_1 & x_2 & \cdots & x_n\end{pmatrix}
\begin{pmatrix}a_1 & 0 & \cdots & 0 \\ 0 & a_2 & \cdots & 0 \\ \vdots & \vdots & \ddots & \vdots \\ 0 & 0 & \cdots & a_n \end{pmatrix}
\begin{pmatrix}1 & x_1 \\ 1 & x_2 \\ \vdots & \vdots \\1 & x_n \end{pmatrix}.$$
Following the above observation, there would be likely to be a connection between the representability of a positive integer by an $m$-gonal form and the representability of a binary quadratic $\z$-lattice by a diagonal quadratic $\z$-lattice.
In this paper (especially in Section 3), we verify the representability of positive integers by an $m$-gonal form by looking into the association between the representability of a positive integer by an $m$-gonal form and the representability of a binary quadratic $\z$-lattice by a diagonal quadratic $\z$-lattice.
\vskip 1em

\begin{section}*{Acknowledgements}
The author should like to express gratitude to Ben Kane for meticulous reading and many valuable comments which corrected wrong parts and typos and lead a lot of improvement. 
\end{section}

\section{Candidates for $\gamma_{m,(a_1,\cdots,a_5)}=m-4$}

\vskip 1em

In this section, we sort out the candidates $(a_1,a_2,a_3,a_4,a_5)$ for which $$\gamma_{m,(a_1,a_2,a_3,a_4,a_5)}=m-4$$ for any $m-4>a_1+a_2+a_3+a_4+a_5$ (or at least for almost all $m$).
Note that the all $(a_1,a_2,a_3,a_4)$ following (\ref{esc}) are
$$\begin{array} {lllll}
(1,1,1,1),& (1,1,1,2),  \!& (1,1,1,3),  \!&(1,1,1,4),\!&    \\
(1,1,2,2),&  (1,1,2,3),  \!&(1,1,2,4),  \!&(1,1,2,5),\!& \\
(1,1,3,3),&  (1,1,3,4),  \!&(1,1,3,5),  \!&(1,1,3,6),\!& \\
(1,2,2,2),& (1,2,2,3),  \!& (1,2,2,4),  \!& (1,2,2,5),\!& (1,2,2,6),   \\
(1,2,3,3),& (1,2,3,4),  \!& (1,2,3,5),  \!& (1,2,3,6),\!& (1,2,3,7),   \\
(1,2,4,4),& (1,2,4,5),  \!& (1,2,4,6),  \!& (1,2,4,7),\!& (1,2,4,8).   \\
\end{array}$$

\vskip 0.5em
\begin{rmk}
\noindent $(1)$ Every $m$-gonal form $\left<a_1,\cdots,a_n\right>_m$ with $$(a_1,a_2,a_3,a_4)=(1,1,1,3)$$ which represents every positive integer up to $m-4$ with $a_1+\cdots +a_n=m-4$ does not represent $$2m-4.$$
For suppose that $$2m-4=a_1P_m(x_1)+\cdots+a_nP_m(x_n)$$ with $(a_1,a_2,a_3,a_4)=(1,1,1,3)$ and $a_1+\cdots +a_n=m-4$ under (\ref{ass}).
Then without loss of generality, we may have that $x_1 \in \{-1,2\}, \ x_2 \in \{-1,0,1,2\},$ and $x_3, x_4,\cdots,x_n \in \{0,1\}$ since the $m$-gonal numbers $P_m(x)$ are bigger than $m$ except $P_m(-1),$ $P_m(0),$  $P_m(1),$ and $P_m(2)$.
For each pair $(x_1,x_2)$ with $x_1 \in \{-1,2\}$ and $x_2 \in \{-1,0,1,2\},$ one may easily show that there is no $(x_3,\cdots,x_n) \in \{0,1\}^{n-2}$ for
$2m-4=a_1P_m(x_1)+\cdots+a_nP_m(x_n)$ under our assumptions.
For the below cases, one may show those similarly with this argument.

\noindent $(2)$ Every $m$-gonal form $\left<a_1,\cdots,a_n\right>_m$ with $$(a_1,a_2,a_3,a_4)=(1,1,1,4)$$ which represents every positive integer up to $m-4$ with $a_1+\cdots +a_n=m-4$ does not represent $$2m-4.$$

\noindent $(3)$ Every $m$-gonal form $\left<a_1,\cdots,a_n\right>_m$ with $$(a_1,a_2,a_3,a_4)=(1,1,2,5)$$ which represents every positive integer up to $m-4$ with $a_1+\cdots +a_n=m-4$ does not represent $$2m-2.$$

\noindent $(4)$ Every $m$-gonal form $\left<a_1,\cdots,a_n\right>_m$ with $$(a_1,a_2,a_3)=(1,1,3)$$ which represents every positive integer up to $m-4$ with $a_1+\cdots +a_n=m-4$ does not represent $$m-1.$$

\noindent $(5)$ Every $m$-gonal form $\left<a_1,\cdots,a_n\right>_m$ with $$(a_1,a_2)=(1,2)$$ which represents every positive integer up to $m-4$ with $a_1+\cdots +a_n=m-4$ does not represent $$m-2.$$
\end{rmk}

\begin{rmk}
From the above observation, we may see that when $(a_1,a_2,a_3,a_4)=$ $(1,1,1,3),$ $(1,1,1,4),$ $(1,1,2,5),$ $(1,1,3,a_4),$ or $(1,2,a_3,a_4)$,
$$\gamma_{m,(a_1,a_2,a_3,a_4,a_5)}>m-4$$
holds
for any $m-4 \ge a_1+a_2+a_3+a_4+a_5+a_5$.
In other words, in order for $(a_1,a_2,a_3,a_4,a_5)$ to have
$$\gamma_{m,(a_1,a_2,a_3,a_4,a_5)}=m-4$$
for any $m-4>a_1+a_2+a_3+a_4+a_5$ (or at least for almost all $m$), $(a_1,a_2,a_3,a_4)$ should be one of
$$(1,1,1,1), \ (1,1,1,2), \ (1,1,2,2), \ (1,1,2,3), \ (1,1,2,4).$$
So now we get the candidates $(a_1,a_2,a_3,a_4,a_5)$
for $$\gamma_{m,(a_1,a_2,a_3,a_4,a_5)}=m-4$$ 
for any $m-4>a_1+a_2+a_3+a_4+a_5$
as
$$\begin{array} {llllll}
(1,1,1,1,1),& (1,1,1,1,2),  \!& (1,1,1,1,3),  \!&(1,1,1,1,4),\!& (1,1,1,1,5),\!&    \\
(1,1,1,2,2),&  (1,1,1,2,3),  \!&(1,1,1,2,4),  \!&(1,1,1,2,5),\!& (1,1,1,2,6), \!&  \\
(1,1,2,2,2),& (1,1,2,2,3),  \!& (1,1,2,2,4),  \!& (1,1,2,2,5),\!& (1,1,2,2,6),\!& (1,1,2,2,7),   \\
(1,1,2,3,3),&  (1,1,2,3,4),  \!&(1,1,2,3,5),  \!&(1,1,2,3,6),\!& (1,1,2,3,7), \!& (1,1,2,3,8),  \\
(1,1,2,4,4),&  (1,1,2,4,5),  \!&(1,1,2,4,6),  \!&(1,1,2,4,7),\!& (1,1,2,4,8), \!& (1,1,2,4,9).  \\
\end{array}$$

\vskip 1em

\noindent $(1)$ Every $m$-gonal form $\left<a_1,\cdots,a_n\right>_m$ with 
$$(a_1,a_2,a_3,a_4,a_5)=(1,1,1,2,5)$$ 
and
$$a_6=11$$
which represents every positive integer up to $m-4$ and $a_1+\cdots +a_n=m-4$ does not represent 
$$5(m-2)+5.$$
Which implies that $\gamma_{m,(1,1,1,2,5)}>m-4$ for all $m-4 \ge 1+1+1+2+5+11+11.$

\noindent $(2)$ Every $m$-gonal form $\left<a_1,\cdots,a_n\right>_m$ with 
$$(a_1,a_2,a_3,a_4,a_5)=(1,1,1,2,6)$$ 
and
$$a_6=11 \text{ and } a_7=23$$
which represents every positive integer up to $m-4$ and $a_1+\cdots +a_n=m-4$ does not represent $$5(m-2)+11.$$
Which implies that $\gamma_{m,(1,1,1,2,6)}>m-4$ for all $m-4 \ge 1+1+1+2+6+11+23+23.$

\noindent $(3)$ Every $m$-gonal form $\left<a_1,\cdots,a_n\right>_m$ with 
$$(a_1,a_2,a_3,a_4,a_5)=(1,1,2,2,6)$$ 
and
$$a_6=6 \text{ and }  a_7 \ge 12$$
which represents every positive integer up to $m-4$ and $a_1+\cdots +a_n=m-4$ does not represent 
$$5(m-2)+6.$$
Which implies that $\gamma_{m,(1,1,2,2,6)}>m-4$ for all $m-4 \ge 1+1+2+2+6+12+12.$

\noindent $(4)$ Every $m$-gonal form $\left<a_1,\cdots,a_n\right>_m$ with 
$$(a_1,a_2,a_3,a_4,a_5)=(1,1,2,2,7)$$ 
and
$$a_6=13 \text{ and } a_7 \ge19$$
which represents every positive integer up to $m-4$ and $a_1+\cdots +a_n=m-4$ does not represent 
$$5(m-2)+13.$$
Which implies that $\gamma_{m,(1,1,2,2,7)}>m-4$ for all $m-4 \ge 1+1+2+2+7+13+19+19.$

\noindent $(5)$ Every $m$-gonal form $F_m(\mathbf x)=\left<a_1,\cdots,a_n\right>_m$ with 
$$(a_1,a_2,a_3,a_4,a_5)=(1,1,2,3,7)$$ 
and
$$a_6=15 $$
which represents every positive integer up to $m-4$ and $a_1+\cdots +a_n=m-4$ does not represent 
$$7(m-2)+7.$$
Which implies that $\gamma_{m,(1,1,2,3,7)}>m-4$ for all $m-4 \ge 1+1+2+3+7+15+15$.
\noindent $(6)$ Every $m$-gonal form $F_m(\mathbf x)=\left<a_1,\cdots,a_n\right>_m$ with 
$$(a_1,a_2,a_3,a_4,a_5)=(1,1,2,3,8)$$ 
and
$$a_6=15 \text{ and } a_7 \ge 23$$
which represents every positive integer up to $m-4$ and $a_1+\cdots +a_n=m-4$ does not represent 
$$7(m-2)+15.$$
Which implies that $\gamma_{m,(1,1,2,3,8)}>m-4$ for all $m-4 \ge 1+1+2+3+8+15+23+23.$
\end{rmk}

\vskip 1em
From the above observations from (1) to (6), we lastly sort out the candidates $(a_1,a_2,a_3,a_4,a_5)$ for 
$$\gamma_{m,(a_1,a_2,a_3,a_4,a_5)}=m-4$$
for any $m-4>a_1+a_2+a_3+a_4+a_5$ (or at least for almost all $m$) as pairs in Table \ref{t}.

\vskip 0.7em

In next section, we show that for each candidate in Table \ref{t} except $(1,1,2,4,8)$,
$$\gamma_{m,(a_1,a_2,a_3,a_4,a_5)}=m-4$$
holds for all $m-4>a_1+a_2+a_3+a_4+a_5$ and
$$\gamma_{m,(1,1,2,4,8)}=m-4$$
for all $m>2 \sqrt{4+8C}+2$ where $C$ is the constant in Theorem \ref{c}. 

Even though the author failed to find the exact bound for $m$ for $\gamma_{m,(1,1,2,4,8)}=m-4$, one may much reduce the restriction $m$ in order for to have $\gamma_{m,(1,1,2,4,8)}=m-4$.

\begin{center}
\begin{table} 
\caption{Candidates for $\gamma_{m,(a_1,a_2,a_3,a_4,a_5)}=m-4$} 
\label{t}
$\begin{array} {llllll}
(1,1,1,1,1),& (1,1,1,1,2),  \!& (1,1,1,1,3),  \!&(1,1,1,1,4),\!& (1,1,1,1,5),&   \\
(1,1,1,2,2),&  (1,1,1,2,3),  \!&(1,1,1,2,4),  \!&  \!&  & \\
(1,1,2,2,2),& (1,1,2,2,3),  \!& (1,1,2,2,4),  \!& (1,1,2,2,5),\!&  &  \\
(1,1,2,3,3),&  (1,1,2,3,4),  \!&(1,1,2,3,5),  \!&(1,1,2,3,6),\!&    & \\
(1,1,2,4,4),&  (1,1,2,4,5),  \!&(1,1,2,4,6),  \!&(1,1,2,4,7),\!& (1,1,2,4,8),\!& (1,1,2,4,9).   \\
\end{array}$
\end{table}
\end{center}

\section{Main results }

\begin{prop} \label{l1}
Let $F_m(\mathbf x)=a_1P_m(x_1)+\cdots+a_nP_m(x_n)$ under (\ref{ass}) represents every positive integer up to $m-4$.
If $a_1P_m(x_1)+\cdots+a_iP_m(x_i)$ (for some $i \le n$) represents the consecutive $a_1+\cdots+a_i+1$ integers $N,\cdots, N+(a_1+\cdots+a_i)$, then 
$F_m(\mathbf x)$ represents the consecutive $m-3$ integers $N,\cdots, N+(m-4)$.
\end{prop}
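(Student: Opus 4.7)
The plan is to exploit the fact that, for integers below $m-3$, any representation by $F_m$ is forced to use only coordinates in $\{0,1\}$, so representability reduces to a subset-sum condition on $(a_1,\ldots,a_n)$; the given block of representations by the sub-form can then be ``shifted'' by any subset sum coming from the tail $a_{i+1},\ldots,a_n$.

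First I would record the observation that $P_m(0)=0$, $P_m(1)=1$, and $P_m(x)\ge m-3$ for every $x\in\z\setminus\{0,1\}$ (the next smallest values being $P_m(-1)=m-3$ and $P_m(2)=m$). Since each term $a_kP_m(x_k)$ is non-negative, any representation $u=\sum_{k=1}^n a_kP_m(x_k)$ with $0\le u\le m-4$ must have $x_k\in\{0,1\}$ for every $k$. Combined with the hypothesis that $F_m$ represents every positive integer up to $m-4$, this yields, for each $u\in\{0,1,\ldots,m-4\}$, a subset $T_u\subseteq\{1,\ldots,n\}$ such that $u=\sum_{k\in T_u}a_k$.

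To produce a representation of $N+u$ for an arbitrary such $u$, I would split $T_u$ across the cut between the first $i$ indices and the rest, setting $T_u'=T_u\cap\{1,\ldots,i\}$, $T_u''=T_u\cap\{i+1,\ldots,n\}$, and $t=\sum_{k\in T_u'}a_k$. Then $t\in\{0,1,\ldots,s\}$ with $s=a_1+\cdots+a_i$, so by hypothesis the sub-form represents $N+t$, say $N+t=\sum_{k=1}^i a_kP_m(y_k)$. Taking $x_k=y_k$ for $k\le i$, $x_k=1$ for $k\in T_u''$, and $x_k=0$ for the remaining indices, one computes
\[
F_m(\mathbf x)=(N+t)+\sum_{k\in T_u''}a_k=N+u.
\]
Letting $u$ range over $\{0,1,\ldots,m-4\}$ produces all $m-3$ consecutive integers $N,\ldots,N+(m-4)$.

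The only delicate point is the first step: it is the gap between $P_m(1)=1$ and $P_m(-1)=m-3$ that lets one convert the hypothesis ``$F_m$ represents every integer up to $m-4$'' into the purely combinatorial statement that every $u\le m-4$ is a subset sum of $\{a_1,\ldots,a_n\}$. Once this is in hand, the decoupling of the first $i$ coefficients from the tail is automatic and the shift-by-$t$ construction finishes the argument.
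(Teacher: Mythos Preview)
Your argument is correct and is essentially the same as the paper's: both observe that any representation of $r\le m-4$ by $F_m$ must use coordinates in $\{0,1\}$, then replace the first $i$ coordinates by a representation of $N+t$ (with $t$ the partial sum over those coordinates) and keep the tail unchanged. Your write-up is simply more explicit about why the $\{0,1\}$ restriction holds.
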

\begin{proof}
By assumption, for $0\le r \le m-4$, we may take $\mathbf x(r) \in \{0,1\}^n$
for which $$F_m(\mathbf x(r))=r.$$
And then for $\mathbf y(r)$ with $$a_1P_m(y_1(r))+\cdots +a_iP_m(y_i(r))=N+a_1P_m(x_1(r))+\cdots +a_iP_m(x_i(r)),$$
we have that 
$$a_1P_m(y_1(r))+\cdots +a_iP_m(y_i(r))+a_{i+1}P_m(x_{i+1}(r))+\cdots+aP_n(x_n(r))=N+r.$$
\end{proof}

\begin{coro} \label{rmk}
For an $m$-gonal form $$F_m(\mathbf x)=a_1P_m(x_1)+\cdots+a_nP_m(x_n)$$ under (\ref{ass}) which represents every positive integer up to $m-4$, if 
$$a_1P_m(x_1)+\cdots +a_iP_m(x_i)$$
(for some $i \le n$) represents every positive integer of the form of 
$$A(m-2)+B, \ A(m-2)+(B+1), \ \cdots, \ A(m-2)+(B+a_1+\cdots +a_i+1)$$
for a fixed $B \in \z$, then  
$F_m(\mathbf x)$ is universal.
\end{coro}
\begin{proof}
By using Proposition \ref{l1} with 
$$\text{$N=A(m-2)+B+1$,}$$ 
we obtain that $F_m(\mathbf x)$ represents every positive integer in 
$$[A(m-2)+(B+1),A(m-2)+B+(m-3)]$$ yielding that $F_m(\mathbf x)$ represents every positive integer in $$[A(m-2)+B,A(m-2)+B+(m-3)].$$
Since $\{B, B+1, \cdots, B+(m-3)\}$ forms a complete system of residues modulo $m-2$, it completes the proof.
\end{proof}

\vskip 0.5em

In Lemma \ref{ml}, we look into the association between the representability of a positive integer by an $m$-gonal form and the representability of a binary quadratic $\z$-lattice by a diagonal quadratic $\z$-lattice to figure out some type of positive integers by an $m$-gonal form.

\vskip 0.5em
\begin{lem} \label{ml}
\noindent $(1)$ The $m$-gonal form $$\left<1,1,1,1\right>_m$$ represents every positive integer $A(m-2)+B$ such that 
$$\text{$8A+4B-B^2 \not= 4^l(8k+7)$ and $8A+4B-B^2 \ge 0$}$$
for any $l$ and $k$.

\noindent $(2)$ The $m$-gonal form 
$$\left<1,1,1,2\right>_m$$ 
represents every positive integer $A(m-2)+B$ such that 
$$\text{$(A,B) \not\eq (0,0) \pmod 5$ and $10A+5B-B^2 \ge 0$.}$$

\noindent $(3)$ The $m$-gonal form $$\left<1,1,1,4\right>_m$$ represents every positive integer $A(m-2)+B$ such that 
$$\text{$14A+7B-B^2 \not= 4^l(8k+7)$ and $14A+7B-B^2 \ge 0$}$$
for any $l$ and $k$.

\noindent $(4)$ The $m$-gonal form 
$$\left<1,1,2,2\right>_m$$ 
represents every positive integer $A(m-2)+B$ such that  
$$\text{$12A+6B-B^2 \not= 4^l(8k+7)$ and $12A+6B-B^2 \ge 0$}$$
for any $l$ and $k$.

\noindent $(5)$ The $m$-gonal form 
$$\left<1,1,2,3\right>_m$$
represents every positive integer $A(m-2)+B$ such that  
$$\text{$(A,B) \not\eq (0,0) \pmod 7$ and $14A+7B-B^2 \ge 0$.}$$

\noindent $(6)$ The $m$-gonal form 
$$\left<1,1,2,4\right>_m$$
represents every positive integer $A(m-2)+B$ with $16A+8B-B^2 \ge 0$ otherwise 
$$A\eq 1 \pmod2\text{ and }B \eq 4 \pmod8$$
or
$$A\eq 0 \pmod2\text{ and }B \eq 0 \pmod8.$$

\noindent $(7)$ The $m$-gonal form 
$$\left<1,2,2,2\right>_m$$
represents every positive integer $A(m-2)+B$ such that  
$$\text{$14A+7B-B^2 \not=2\cdot4^l(8k+7)$ and $14A+7B-B^2 \ge 0$}$$
for any $l$ and $k$.

\noindent $(8)$ The $m$-gonal form 
$$\left<1,2,2,3\right>_m$$
represents every positive integer $A(m-2)+B$ such that  
$$\text{$B \not\eq 0 \pmod 4$ and $16A+8B-B^2 \ge 0$.}$$
\end{lem}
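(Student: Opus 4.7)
The plan is to invoke the dictionary from the introduction. By the system (\ref{system}), representability of $A(m-2)+B$ by $\langle a_1,\ldots,a_4\rangle_m$ is equivalent to integral representability of the binary $\z$-lattice $L_{A,B}:=[a_1+\cdots+a_4,\,B,\,2A+B]$ by the diagonal quaternary lattice $M:=\langle a_1,\ldots,a_4\rangle$. The discriminant $(a_1+\cdots+a_4)(2A+B)-B^2$ of $L_{A,B}$ is precisely the quantity appearing in each of (1)--(8). For each of the eight specific $M$'s (all of small discriminant and, one verifies, of class number one), $L_{A,B}\hookrightarrow M$ holds iff it holds $p$-adically at every prime together with the positivity $\disc L_{A,B}\ge 0$ stated in each item. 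Since local obstructions only occur at primes dividing $2\disc(M)(a_1+\cdots+a_4)$, the problem reduces to finitely many local analyses.

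For cases (1), (3), (4), (7) only $p=2$ contributes, and after a change of variables of the form $s=x_1+x_2,\ t=x_1-x_2,\ u=x_3+x_4,\ v=x_3-x_4$ (scaled for the coefficients of $M$), the system (\ref{system}) turns into a representation problem for a ternary $\z$-lattice such as $\langle 1,2,2\rangle$, whose exceptional set is precisely $4^l(8k+7)$; this reproduces the stated condition. For (2) and (5) the extra obstruction sits at $p=5$, respectively $p=7$: when $(A,B)\eq(0,0)\pmod p$ one has $p\mid B$ and $p\mid 2A+B$, so the $p$-adic scale of $L_{A,B}$ jumps to $\ge 1$, and the $p$-modular binary sublattices of the unimodular $M\otimes\z_p$ have constrained norms (because $2$ is a non-QR mod $5$, respectively $-2$ is a non-QR mod $7$). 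Hence $(A,B)\nequiv(0,0)\pmod p$ is sufficient for local, and therefore global, representability. For (6) and (8) the $2$-adic Jordan splitting of $M\otimes\z_2$ has components of unequal scale, which produces the congruence exclusions on $A\pmod 2$ and $B\pmod{4}$ or $\pmod{8}$ after a subcase analysis.

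The main technical obstacle I anticipate is the $2$-adic bookkeeping for items (6) $\langle 1,1,2,4\rangle$ and (8) $\langle 1,2,2,3\rangle$, where the local embedding criterion has to be tracked simultaneously in the parity of $B$ and the $2$-adic valuation of $2A+B$, and then matched with the clean congruences stated in the lemma (such as ``$A\eq 1\pmod 2$ and $B\eq 4\pmod 8$'' for (6)). This will be a labor-intensive enumeration of the relevant Jordan-component configurations, but it requires no new idea beyond the local-global strategy outlined above.
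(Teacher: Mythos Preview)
Your overall framework---reduce to representability of the binary lattice $L_{A,B}=[a_1+\cdots+a_4,B,2A+B]$ by $M=\langle a_1,\ldots,a_4\rangle$, then use class number one to pass to local conditions---is exactly what the paper does. But you have skipped the step that carries the actual content of the paper's argument.

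You assert that solvability of the system (\ref{system}) is \emph{equivalent} to $L_{A,B}\hookrightarrow M$. Only the forward direction is immediate from the introduction. For the converse, a representation $L_{A,B}\hookrightarrow M$ sends the first basis vector to some $\mathbf u\in M$ with $Q(\mathbf u)=a_1+\cdots+a_4$, and there is no reason this $\mathbf u$ should be $(1,1,\ldots,1)$. In case~(1), for instance, $\langle 1,1,1,1\rangle$ has \emph{two} orbits of norm-$4$ vectors under its isometry group: those of shape $(\pm1,\pm1,\pm1,\pm1)$ and those of shape $(\pm2,0,0,0)$. A representation landing in the second orbit gives $2x_1=B$, $x_1^2+x_2^2+x_3^2+x_4^2=2A+B$, which is \emph{not} the system you want. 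The paper's proof supplies, case by case, an explicit transformation (for (1) it is $\bigl(\tfrac{x_1+x_2-x_3-x_4}{2},\tfrac{x_1-x_2+x_3-x_4}{2},\tfrac{x_1-x_2-x_3+x_4}{2},\tfrac{x_1+x_2+x_3+x_4}{2}\bigr)$, with the parity check that makes the halves integral) converting a ``wrong-orbit'' representation into a solution of (\ref{system}). Analogous ad~hoc transformations are given for (3), (4), (7); items (2), (5), (6), (8) are quoted from~\cite{P'}. Without these transformations the implication ``$L_{A,B}\hookrightarrow M\Rightarrow$ system solvable'' is unjustified, and your argument has a gap.

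Your alternative route for (1), (3), (4), (7)---the substitution $s=x_1+x_2$, $t=x_1-x_2$, $u=x_3+x_4$, $v=x_3-x_4$ reducing to a ternary problem like $\langle 1,2,2\rangle$---does not work as stated: the parity constraints $s\equiv t$, $u\equiv v\pmod 2$ mean the map is not onto $\z^4$, and after eliminating one variable via the linear equation you do not land on a diagonal ternary form. If you want to avoid the binary-lattice route entirely you would need a genuinely different reduction; as written, this sketch does not close the gap above.
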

\begin{proof}
For the proof of (2), (5), (6), and (8), see Lemma 2.3 in \cite{P'}.

\noindent $(1)$ It would be enough to show that there is an integer solution $(x_1,x_2,x_3,x_4) \in \z^4$ for the diophantine system
\begin{equation}\label{1111}\begin{cases}x_1+x_2+x_3+x_4=B \\ x_1^2+x_2^2+x_3^2+x_4^2=2A+B \end{cases}\end{equation}
where $\text{$8A+4B-B^2 \not= 4^l(8k+7)$ and $8A+4B-B^2 \ge 0$}$.
By easy numeration, we may see that there are exactly two different ways 
$$4=\begin{pmatrix}1 & 1 &1 & 1  \\ \end{pmatrix}
\begin{pmatrix}1 & 0 & 0 & 0 \\ 0 & 1 & 0 & 0 \\ 0 & 0 & 1 & 0 \\ 0 & 0 & 0 & 1 \end{pmatrix}
\begin{pmatrix} 1  \\ 1  \\ 1  \\ 1  \end{pmatrix}$$
and
$$4=\begin{pmatrix}2 & 0 &0 & 0  \\ \end{pmatrix}
\begin{pmatrix}1 & 0 & 0 & 0 \\ 0 & 1 & 0 & 0 \\ 0 & 0 & 1 & 0 \\ 0 & 0 & 0 & 1 \end{pmatrix}
\begin{pmatrix} 2  \\ 0  \\ 0  \\ 0  \end{pmatrix}$$
to represent $4$ by $\left<1,1,1,1\right>$ up to isometry.
So we have that a binary quadratic $\z$-lattice $[4,B,2A+B]$ is represented by $\left<1,1,1,1\right>$ if and only if there is an integer solution $(x_1,x_2,x_3,x_4) \in \z^4$ for either 
$$\begin{pmatrix}4 & B \\ B & 2A+B\end{pmatrix}=
\begin{pmatrix}1 & 1 &1 & 1 \\ x_1 & x_2 & x_3 & x_4\end{pmatrix}
\begin{pmatrix}1 & 0 & 0 & 0 \\ 0 & 1 & 0 & 0 \\ 0 & 0 & 1 & 0 \\ 0 & 0 & 0 & 1 \end{pmatrix}
\begin{pmatrix}1 & x_1 \\ 1 & x_2 \\ 1 & x_3 \\1 & x_4 \end{pmatrix}$$
or
$$\begin{pmatrix}4 & B \\ B & 2A+B\end{pmatrix}=
\begin{pmatrix}2 & 0 &0 & 0 \\ x_1 & x_2 & x_3 & x_4\end{pmatrix}
\begin{pmatrix}1 & 0 & 0 & 0 \\ 0 & 1 & 0 & 0 \\ 0 & 0 & 1 & 0 \\ 0 & 0 & 0 & 1 \end{pmatrix}
\begin{pmatrix}2 & x_1 \\ 0 & x_2 \\ 0 & x_3 \\0 & x_4 \end{pmatrix},$$
i.e., there is an integer solution $(x_1,x_2,x_3,x_4) \in \z^4$ for either (\ref{1111})
or 
\begin{equation} \label{1111'}
\begin{cases}2x_1=B \\ x_1^2+x_2^2+x_3^2+x_4^2=2A+B.
\end{cases}
\end{equation}
Note that for $(x_1,x_2,x_3,x_4) \in \z^4$ satisfying (\ref{1111'}),
$$\left(\frac{x_1+x_2-x_3-x_4}{2}, \frac{x_1-x_2+x_3-x_4}{2},\frac{x_1-x_2-x_3+x_4}{2},\frac{x_1+x_2+x_3+x_4}{2} \right)
$$
would be an integer solution for (\ref{1111}) since $x_1+x_2+x_3+x_4 \eq 0 \pmod 2$.
So we conclude that the diophantine system (\ref{1111}) has an integer solution $(x_1,x_2,x_3,x_4) \in \z^4$ if and only if the binary quadratic $\z$-lattice 
$[4,B,2A+B]$ is represented by the diagonal quaternary quadratic $\z$-lattice $\left<1,1,1,1\right>$.

On the other hand, since the class number of $\left<1,1,1,1\right>$ is one, 
we may see that 
$[4,B,2A+B]$ is represented by $\left<1,1,1,1\right>$ if and only if $8A+4B-B^2 \not= 4^l(8k+7)$ for any 
non-negative $l$ and $k$ and $8A+4B-B^2 \ge 0$
by considering its local representability.
This completes the proof.

(3) Since there is only one representation $$7=1\cdot 1^2+1\cdot 1^2+1\cdot 1^2+4\cdot 1^2$$ of $7$ by $\left<1,1,1,4\right>$ up to isometry, we may conclude that 
\begin{equation}\label{1114}\begin{cases}x_1+x_2+x_3+4x_4=B \\ x_1^2+x_2^2+x_3^2+4x_4^2=2A+B \end{cases}\end{equation}
has an integer solution $(x_1,x_2,x_3,x_4) \in \z^4$ if and only if the binary quadratic $\z$-lattice $[7,B,2A+B]$ is represented by the diagonal quaternary quadratic $\z$-lattice $\left<1,1,1,4\right>$.
By examining the local representability of $[7, B, 2A+B]$ by $\left<1,1,1,4\right>$, one may obtain this since the class number of $\left<1,1,1,4\right>$ is one.

(4) Since there are exactly two different representations $$6=1\cdot 1^2+1\cdot 1^2+2\cdot 1^2+2\cdot 1^2$$ and $$6=1\cdot 2^2+1\cdot 0^2+2\cdot 1^2+2\cdot 0^2$$ of $6$ by $\left<1,1,2,2\right>$ up to isometry,
we have that the binary quadratic $\z$-lattice $[6,B, 2A+B]$ is represented by the quaternary quadratic $\z$-lattice $\left<1,1,2,2\right>$ if and only if there is an integer solution $(x_1,x_2,x_3,x_4) \in \z^4$ for either  
\begin{equation} \label{1122}
\begin{cases}
x_1+x_2+2x_3+2x_4=B \\
x_1^2+x_2^2+2x_3^2+2x_4^2=2A+B,
\end{cases}
\end{equation}
or
\begin{equation} \label{1122'}
\begin{cases}
2x_1+2x_3=B \\
x_1^2+x_2^2+2x_3^2+2x_4^2=2A+B.
\end{cases}
\end{equation}
Note that for $(x_1,x_2,x_3,x_4) \in \z^4$ satisfying (\ref{1122'}),
$$\left(x_3+x_4,x_3-x_4,\frac{x_1+x_2}{2},\frac{x_1-x_2}{2}\right) \in \z^4$$ would be an integer solution for (\ref{1122}).
So we may conclude that the binary quadratic $\z$-lattice $[6,B, 2A+B]$ is represented by the quaternary quadratic $\z$-lattice $\left<1,1,2,2\right>$ if and only if there is an integer solution $(x_1,x_2,x_3,x_4) \in \z^4$ for (\ref{1122}).
By examining the local representability of the binary quadratic form $[6,B, 2A+B]$ by $\left<1,1,2,2\right>$, we may complete the proof since the class number of $\left<1,1,2,2\right>$ is one.

(7) Since there is only one representation $$7=1\cdot 1^2+2\cdot 1^2+2\cdot 1^2+2\cdot 1^2$$ of $7$ by $\left<1,2,2,2\right>$ up to isometry, we may have that
\begin{equation}\label{1222}\begin{cases}x_1+2x_2+2x_3+2x_4=B \\ x_1^2+2x_2^2+2x_3^2+2x_4^2=2A+B \end{cases}\end{equation}
has an integer solution $(x_1,x_2,x_3,x_4) \in \z^4$ if and only if the binary quadratic $\z$-lattice $[7,B,2A+B]$ is represented by the diagonal quaternary quadratic $\z$-lattice $\left<1,2,2,2\right>$.
By examining the local representability of $[7, B, 2A+B]$ by $\left<1,2,2,2\right>$, one may obtain this since the class number of $\left<1,2,2,2\right>$ is one.
\end{proof}

\vskip 0.5em
\begin{rmk}
In Lemma \ref{ml}, we got some types of positive integers which are represented by an $m$-gonal form of rank $4$ by examining the representability of a binary quadratic form by a quaternary diagonal quadratic form.
We took an advatage of some quaternary diagonal quadratic $\z$-lattice having class number one in the proof of Lemma \ref{ml}.
We would have difficulty to use the method in proof of Lemma \ref{ml} when the rank of an $m$-gonal form is greater than $4$ because
a diagonal quadratic form of rank greater than $4$ would be more likely to have class number bigger than one.
And for a diagonal quadratic form of rank smaller than $4$, there would be too much local restriction in representing a binary quadratic form.
So when it comes to examine the positive integers which are represented by an $m$-gonal form by using the method in the proof of Lemma \ref{ml}, it seems that  the most ideal rank for the $m$-gonal form is $4$, even though an $m$-gonal form of rank $5$ would be apt to represent much more integers than an $m$-gonal form of rank $4$.    
With the assistence of the results in Lemma \ref{ml}, in following lemma, we survey the type of positive integers which are represented by an $m$-gonal form of rank $5$.
\end{rmk}

\vskip 0.5em

\begin{lem} \label{lem1}
\noindent $(1)$ For $(a_1,a_2,a_3,a_4,a_5)$ in Table \ref{t} except $(1,1,1,1,4), (1,1,1,1,5),$ $(1,1,2,2,2),$ and $(1,1,2,4,8)$,
$$\left<a_1,a_2,a_3,a_4,a_5\right>_m$$
represents every positive integer of the form of
$$A(m-2)-1, \ A(m-2), \ A(m-2)+1, \cdots, A(m-2)+\left(\sum_{k=1}^5 a_k\right).$$

\noindent $(2)$ When $(a_1,a_2,a_3,a_4,a_5) = (1,1,1,1,4)$ or $(1,1,1,1,5)$, 
$$\left<a_1,a_2,a_3,a_4,a_5\right>_m$$
represents every positive integer of the form of
$$A(m-2)-2, \ A(m-2)-1, \ A(m-2), \cdots, A(m-2)+\left(\sum_{k=1}^5 a_k-1\right)$$
except $(m-2)-2=m-4$.

\noindent $(3)$ When $(a_1,a_2,a_3,a_4,a_5) = (1,1,2,2,2)$, 
$$\left<a_1,a_2,a_3,a_4,a_5\right>_m$$
represents every positive integer of the form of
$$A(m-2)-2, \ A(m-2)-1, \ A(m-2), \cdots, A(m-2)+\left(\sum_{k=1}^5 a_k-1\right)$$
for $A \not\eq 7 \pmod 8$ except $(m-2)-2=m-4$
and 
$$A(m-2)+B$$
for $A \eq 7 \pmod 8$ and $B \in \{-5,-4,\cdots, 2,3\}\cup \{5,6, \cdots, 12, 13\}$.

\end{lem}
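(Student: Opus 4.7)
The strategy is to peel off the fifth variable. Using the identity $P_m(x)=(m-2)\binom{x}{2}+x$, a representation
$$A(m-2)+B \;=\; \sum_{i=1}^{4}a_iP_m(x_i)+a_5P_m(x_5)$$
is equivalent to a representation of the residual
$$A'(m-2)+B' \;:=\; \Bigl(A-a_5\binom{x_5}{2}\Bigr)(m-2) + (B-a_5x_5)$$
by the rank-$4$ subform $\langle a_1,a_2,a_3,a_4\rangle_m$. For every quintuple in Table \ref{t}, this subform is one of those covered in Lemma \ref{ml}, and its representability is governed either by a discriminant inequality $\alpha A'+\beta B'-B'^2 \ne 4^l(8k+7)$ or by a short congruence on $(A',B')$ modulo $5$, $7$, or $8$. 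The plan is, for each quintuple and each target $B$, to exhibit some $x_5\in\{0,\pm 1,\pm 2\}$ for which the residual condition is met.

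I would first tabulate the induced shifts for $x_5\in\{0,\pm 1,\pm 2\}$, namely
$$(A,B),\;(A,B-a_5),\;(A-a_5,B+a_5),\;(A-a_5,B-2a_5),\;(A-3a_5,B+2a_5),$$
and then work through each quintuple. For the discriminant-type subforms $\langle 1,1,1,1\rangle_m$ and $\langle 1,1,2,2\rangle_m$, when $B'$ is odd the discriminant lies in a residue class mod $8$ precluding the form $4^l(8k+7)$, so the condition is automatic; only the even $B'$'s require a brief modular check, which the five shifts easily cover. For the congruence-type subforms $\langle 1,1,1,2\rangle_m$, $\langle 1,1,2,3\rangle_m$, and $\langle 1,1,2,4\rangle_m$, at most two residue classes are forbidden and one of the shifts always hits a safe class. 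Small $A$ for which a given shift forces the residual negative is inspected by hand, a finite check per quintuple. This handles part $(1)$.

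For parts $(2)$ and $(3)$, the exception $(m-2)-2=m-4$ is justified by a size argument: since $P_m(x)\ge m-3$ whenever $x=-1$ or $|x|\ge 2$, any representation of $m-4<m-3$ forces every $x_i\in\{0,1\}$, bounding the sum by $\sum a_k\in\{8,9\}<m-4$ under the standing assumption. The other admissible $B$'s are handled by the shift argument as in $(1)$. For the $A\equiv 7\pmod 8$ branch of $(3)$, I would show that for each $B\in\{-5,\dots,3\}\cup\{5,\dots,13\}$ some $x_5\in\{0,1\}$ already suffices (odd $B$ gives an automatically valid discriminant, and the even cases reduce to a short modular check using $A\equiv 7\pmod 8$). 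Conversely, at $B=4$ with $A\equiv 7\pmod 8$, a direct computation yields
$$12A'+6B'-B'^2 \;=\; 12A-16x_5(x_5-1)+8 \;\equiv\; 28 \pmod{32}$$
for every $x_5\in\mathbb{Z}$; since every integer $\equiv 28\pmod{32}$ is of the form $4(8k+7)$, Lemma \ref{ml}(4) rules out representability by $\langle 1,1,2,2\rangle_m$ for every choice of $x_5$, giving a genuine obstruction at $B=4$.

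The main obstacle is the $A\equiv 7\pmod 8$ subcase of $(1,1,2,2,2)$, where one must simultaneously certify the obstruction at $B=4$ and verify representability throughout the widened $B$-range, requiring a finer modular analysis than elsewhere. The remaining cases reduce to a systematic enumeration over the five shifts, and the non-representation of $m-4$ follows from a short size comparison.
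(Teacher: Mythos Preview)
Your plan is sound and would prove the lemma; it rests on the same engine as the paper's proof (reduce to a rank-$4$ sub and invoke Lemma~\ref{ml}), but the reduction you use is organized differently. You always peel off the fifth variable and vary $x_5\in\{0,\pm1,\pm2\}$, so the residual is the fixed subform $\langle a_1,a_2,a_3,a_4\rangle_m$ in every case. The paper instead selects whichever rank-$4$ sub is most convenient case by case: for $(1,1,1,1,2)$ it peels off $x_1$ and uses $\langle 1,1,1,2\rangle_m$; for $(1,1,1,1,4)$ it simultaneously uses $\langle 1,1,1,1\rangle_m$, $\langle 1,1,1,4\rangle_m$, and the ``collapsed'' sub $\langle 1,1,2,4\rangle_m$ (obtained by setting two unit coordinates equal); for $(1,1,2,2,2)$ with $A\equiv 7\pmod 8$ it combines $\langle 1,1,2,2\rangle_m$ with the collapsed sub $\langle 1,1,2,4\rangle_m$. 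Your uniform recipe trades these shortcuts for a longer but mechanical residue/positivity check over the five shifts, while the paper's ad hoc choices avoid most of that bookkeeping (e.g.\ for $(1,1,1,1,4)$ the sub $\langle 1,1,2,4\rangle_m$ handles all $B\not\equiv 0,4\pmod 8$ in one stroke).

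One small imprecision: your closing sentence calls the $B=4$, $A\equiv 7\pmod 8$ case a ``genuine obstruction.'' Your computation correctly shows that \emph{your reduction} is blocked there (every shift lands in the forbidden class for $\langle 1,1,2,2\rangle_m$), but this does not prove that $\langle 1,1,2,2,2\rangle_m$ itself fails to represent $A(m-2)+4$. That is fine for the lemma, which makes no claim at $B=4$; just don't read it as a non-representation statement. Similarly, your size argument for the $m-4$ exception tacitly uses $m-4>\sum a_k$, which is the regime where the lemma is applied but is not hypothesized in the lemma itself; the paper simply omits $m-4$ without justifying non-representation.
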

\begin{proof}
(1) This proof contains tedious calculations.
So in this paper, we prove this lemma only for $(a_1,a_2,a_3,a_4,a_5)=(1,1,1,1,2)$.
For the other cases, one may show through similar processings with the case $(a_1,a_2,a_3,a_4,a_5)=(1,1,1,1,2)$ by using Lemma \ref{ml}. 

Now let $(a_1,a_2,a_3,a_4,a_5)=(1,1,1,1,2)$.
By Lemma \ref{ml} (2), the $m$-gonal form
$$\left<a_2,a_3,a_4,a_5\right>_m=\left<1,1,1,2\right>_m$$
represents every positive integer of the form of 
$$A(m-2)+B$$
where $B \in \{-1,1,2,3,4\}$.
When $B=0$ or $5$, the positive integers of the form of $$A(m-2)+B$$ may be represented by 
$$P_m(x_1)+P_m(x_2)+P_m(x_3)+P_m(x_4)+2P_m(x_5)$$ with $x_1=1$ because 
$$P_m(x_2)+P_m(x_3)+P_m(x_4)+2P_m(x_5)$$ 
represents every non-negative integer of the form of $A(m-2)-1$ and $A(m-2)+4$ by Lemma \ref{ml} (2).
By using Lemma \ref{ml} (2) again, we may obtain that
$$\left<a_2,a_3,a_4,a_5\right>_m=\left<1,1,1,2\right>_m$$
represents the positive integers
$$A(m-2)+6$$
for $A \ge 1$ and $6$ is immediately represented by $\left<1,1,1,1,2\right>_m$.
Overall, we may conclude that
$$\left<a_1,a_2,a_3,a_4,a_5\right>_m=\left<1,1,1,1,2\right>_m$$
represents every positive integer of the form of 
$$A(m-2)-1, \ A(m-2), \ A(m-2)+1, \cdots, A(m-2)+\left(\sum_{k=1}^5 a_k\right).$$

(2) First consider $(a_1,a_2,a_3,a_4,a_5)=(1,1,1,1,4)$.\\
 By Lemma \ref{ml} (6), since $$\left<1,1,2,4\right>_m$$ represents every positive integer of the form of
$$A(m-2)+B$$
where $B \in \{-2,-1,1,2,3,5,6,7\}$ except $(m-2)-2$,
we may have that
$$\left<1,1,1,1,4\right>_m$$ also represents every positive integer of the form of
$$A(m-2)+B$$
where $B \in \{-2,-1,1,2,3, 5,6,7\}$ except $(m-2)-2$.
By Lemma \ref{ml} (1) and (3), since $\left<1,1,1,1\right>_m$ and $\left<1,1,1,4\right>_m$ represents every positive integer of the form of $A(m-2)$ where  $8A \not= 4^l(8k+7)$ and $14A \not= 4^l(8k+7)$, respectively, we get that 
$$\left<1,1,1,1,4\right>_m$$ 
represents every positive integer of the form of $A(m-2).$
By Lemma \ref{ml} (1) and (6), since  $\left<1,1,1,1\right>_m$ and $\left<1,1,2,4\right>_m$ represent the positive integers $A(m-2)+4$ for $A \eq 1 \pmod2$ and $A\eq 0 \pmod2$, respectively, we obtain that $$\left<1,1,1,1,4\right>_m$$ 
represents every positive integer of the form of $A(m-2)+4.$
Overall, we obtain that 
$$\left<1,1,1,1,4\right>_m$$
represents every positive integer $$A(m-2)+B$$
where $B \in \{-2,-1,0,\cdots,7\}$  except $(m-2)-2$.

For $(a_1,a_2,a_3,a_4,a_5)=(1,1,1,1,5)$, one may show this similarly with the above by using Lemma \ref{ml} (1).

(3)  Through similar processings with the above, one may see that 
$$\left<1,1,2,2,2\right>_m$$
represents every positive integer $$A(m-2)+B$$
for $A \not\eq 7 \pmod{8}$ and $-2 \le B \le 7$ except $(m-2)-2$  by using Lemma \ref{ml} (4), (6), and (7).

For $A \eq 7 \pmod 8$, 
$$\left<1,1,2,2\right>_m$$ represents the positive integers $$\text{$A(m-2)-4$ and $A(m-2)+12$}$$
by Lemma \ref{ml} (4). 
The $m$-gonal form
$$\left<1,1,2,2,2\right>_m$$ would represent the positive integers $$A(m-2)+B$$
when $A\eq 7 \pmod{8}$ and $-5 \le B \le 13$ with $B \not\eq 4 \pmod 8$
since so does $\left<1,1,2,4\right>_m$ by Lemma \ref{ml} (6).
Therefore we obtain that 
$$\left<1,1,2,2,2\right>_m$$ represents the positive integers $$A(m-2)+B$$
where $A\eq 7 \pmod{8}$ and $B \in \{-5,-4,\cdots, 2,3\}\cup \{5,6, \cdots, 12, 13\}$.
\end{proof}

\vskip 0.5em

\begin{thm} \label{main thm}
\noindent $(1)$ For $(a_1,a_2,a_3,a_4,a_5)$ in Table \ref{t} except $(1,1,2,4,8)$,
we have that
$$\begin{cases}
\gamma_{m,(a_1,a_2,a_3,a_4,a_5)}=m-4 & \text{ when } m-4>a_1+\cdots+a_5\\
\left<a_1,a_2,a_3,a_4,a_5\right>_m \text{ is universal } & \text{ when } m-4 \le a_1+\cdots+a_5.
\end{cases}$$

\noindent $(2)$ $\gamma_{m,(1,1,2,4,8)}=m-4$ for $m>2 \sqrt{4+8C}+2$
where $C$ is the constant in Theorem \ref{c}.

\end{thm}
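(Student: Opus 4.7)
For part $(1)$ the plan is to combine Lemma \ref{lem1} with Proposition \ref{l1} (equivalently Remark \ref{rmk}). Let $F_m$ be any $m$-gonal form with first five coefficients $(a_1,\ldots,a_5)$ in Table \ref{t} other than $(1,1,2,4,8)$, and suppose it represents every positive integer up to $m-4$. For each non-negative integer $A$, Lemma \ref{lem1} provides a block of at least $a_1+\cdots+a_5+2$ consecutive integers of the shape $A(m-2)+c$ represented by the partial form $\langle a_1,\ldots,a_5 \rangle_m$. Applying Proposition \ref{l1} to the two consecutive windows of length $a_1+\cdots+a_5+1$ sitting inside that block promotes it to $m-2$ consecutive integers represented by $F_m$; since $m-2$ is the period in $A$, these intervals paste together without any gap as $A$ runs over the positive integers, so $F_m$ represents every integer $\geq m-3$. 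Combined with the hypothesis on $[1, m-4]$, this yields universality. The adjustments needed for Lemma \ref{lem1}(2) (the single missing value $(m-2)-2=m-4$ is covered by the hypothesis) and for Lemma \ref{lem1}(3) (the class $A \equiv 7 \pmod 8$ is handled by the two explicit $B$-pieces $\{-5,\ldots,3\}$ and $\{5,\ldots,13\}$, each of length $9=a_1+\cdots+a_5+1$, whose combined $F_m$-coverage via Proposition \ref{l1} exceeds $m-2$ under the standing hypothesis $m-4>a_1+\cdots+a_5$) are routine. The case $m-4 \le a_1+\cdots+a_5$ is handled identically since then the hypothesis on $[1,m-4]$ follows automatically from $0/1$ evaluations on the first five variables.

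For part $(2)$, Lemma \ref{lem1} cannot be invoked for $(1,1,2,4,8)$: a direct computation shows that $8P_m(x_5)=4x_5(x_5\mp1)(m-2)\pm 8x_5$ shifts the pair $(A,B)$ by an element of $(0\bmod 2,\,0\bmod 8)$ for every $x_5 \in \z$, so the exception residue classes $(A,B)\bmod(2,8)\in\{(1,4),(0,0)\}$ of $\langle 1,1,2,4 \rangle_m$ from Lemma \ref{ml}(6) are inherited by $\langle 1,1,2,4,8 \rangle_m$. Consequently the partial form does not attain $17$ consecutive integers near any $A(m-2)$, and Proposition \ref{l1} is not directly available. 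Instead we invoke Theorem \ref{c}: since $\gamma_m \le C(m-2)$, it suffices to show $F_m$ represents every positive integer up to $C(m-2)$, and by hypothesis only $N\in (m-4,C(m-2)]$ needs attention.

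For each such $N$, write $N=A(m-2)+B$ with $0\le A\le C$ and $0\le B\le m-3$. If $(A,B)$ is non-exceptional, then $\langle 1,1,2,4 \rangle_m\subseteq F_m$ represents $N$ by Lemma \ref{ml}(6). If $(A,B)$ is exceptional, the plan is to escape the exception residue class by subtracting $a_1 P_m(x_1)=P_m(x_1)$ for a suitably small $x_1\in\z$: since $a_1=1$, the pair $(A,B)$ is shifted by $\left(\tfrac{x_1(x_1\mp1)}{2},\,\pm x_1\right)$, and $x_1$ can be chosen with $B$-contribution nonzero modulo $8$ so that the shifted pair lies outside both exception classes; the residual integer is then represented by the remaining quaternary form $\langle 1,2,4,8 \rangle_m$ applied to $(x_2,x_3,x_4,x_5)$ via an analysis paralleling the class-number-one cases treated in Lemma \ref{ml}. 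The quantitative hypothesis $m>2\sqrt{4+8C}+2$, equivalent to $(m-2)^2>16+32C$, is precisely what is needed to guarantee that after such a shift the discriminant condition $16A'+8B'-(B')^2\ge 0$ remains satisfied uniformly for all $A\le C$ and the compatible choices of $x_1$; verifying this uniform positivity together with the local analysis of the residual quaternary across every exceptional $(A,B)$ in the target range is the main obstacle, and is exactly where the quadratic-in-$m$ bound enters.
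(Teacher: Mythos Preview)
Your treatment of part $(1)$ is essentially the paper's: Lemma \ref{lem1} feeds into Proposition \ref{l1}/Remark \ref{rmk}, and the $(\sum a_k+2)$-block at each $A$ gets stretched to a full period of length $m-2$. One slip: for $(1,1,2,2,2)$ with $m-4\le 8$ the two $9$-term $B$-windows $\{-5,\ldots,3\}$ and $\{5,\ldots,13\}$ from Lemma \ref{lem1}(3), once stretched by Proposition \ref{l1} to length $m-3$, do \emph{not} merge when $m\le 12$, so ``handled identically'' is not correct there. The paper disposes of these finitely many $m$ by a separate direct verification via Lemma \ref{ml}(6), and treats $m=6$ by quoting the universality of $\langle 1,1,2\rangle_6$.

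Your part $(2)$ is a different strategy from the paper's, and it has a genuine gap. You write $N=A(m-2)+B$ with $0\le B\le m-3$, $0\le A\le C$, and assert that if $(A,B)$ avoids the two exceptional residue classes then $\langle 1,1,2,4\rangle_m$ already represents $N$ by Lemma \ref{ml}(6). But that lemma carries the positivity hypothesis $16A+8B-B^2\ge 0$, i.e.\ $(B-4)^2\le 16(A+1)$, which forces $B\le 4+4\sqrt{A+1}$. For any fixed $A\le C$ this is a window of width $O(\sqrt{C})$, whereas $B$ ranges over an interval of length $m-2$; under your standing hypothesis $m>2\sqrt{4+8C}+2$ the vast majority of $B\in[0,m-3]$ violate the positivity condition, and Lemma \ref{ml}(6) says nothing about those $N$. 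The same obstruction applies to your fallback with $\langle 1,2,4,8\rangle_m$ after the shift by $P_m(x_1)$, so the argument does not close. Your interpretation of the bound $m>2\sqrt{4+8C}+2$ as ``what is needed to keep the discriminant positive after the shift'' is therefore backwards.

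The paper avoids this by never trying to hit every $B$ with the quaternary subform. It proves the stronger statement $\gamma_{m,(1,1,2,4)}=m-4$: for each integer $A\in[5,C]$ it uses the universality of $\langle 1,1,2\rangle_3$ and the identity $P_m(x)=(m-2)P_3(x-1)+x$ to produce a specific $(x_1,x_2,x_3,x_4)$ with $P_m$-value $A(m-2)+B$, where $B=x_1+x_2+2x_3+4x_4$ lands in the \emph{exceptional} residue class mod $8$ and satisfies $|B-4|\le 2+\sqrt{4+8A}$. Lemma \ref{ml}(6) then gives $9$ consecutive values around each of $A(m-2)+B$ and $A(m-2)+(8-B)$, and now Proposition \ref{l1} (which you omitted for part $(2)$) stretches each $9$-block to $m-3$ consecutive values of $F_m$. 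The two stretched blocks overlap precisely when $2B-14\le m-4$, and the worst case $B\le 6+\sqrt{4+8C}$ is exactly where the hypothesis $m>2\sqrt{4+8C}+2$ enters. So Proposition \ref{l1} is the missing engine in your part $(2)$, and the quantitative bound governs the overlap of the two stretched windows, not the discriminant sign.
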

\begin{proof}
(1) By Proposition \ref{l1} and Remark \ref{rmk}, we may obtain the first argument from Lemma \ref{lem1}.

When $(a_1,a_2,a_3,a_4,a_5) \not= (1,1,2,2,2)$, since
$$\{-1,0,\cdots,a_1+\cdots+a_5\} \text{ and } \{-2,-1,\cdots,a_1+\cdots+a_5-1\}$$
contains a complete system of residues modulo $m-2$, respectively, the second argument directly follows from Lemma \ref{lem1}.

Now consider $(a_1,a_2,a_3,a_4,a_5) = (1,1,2,2,2)$.
Note that $$A(m-2)+4=(A-1)(m-2)+(m+2).$$
For each $m-4 \le 8$ except $m=6$, one may easily show that
$$\left<1,1,2,2,2\right>_m$$
represents $(A-1)(m-2)+(m+2)$ for all $A \eq 7 \pmod 8$ by using Lemma \ref{ml} (6).
This implies that $$\left<1,1,2,2,2\right>_m$$
represents all the positive integers of the form of
$$A(m-2)-2, \ A(m-2)-1, \ A(m-2), \cdots, A(m-2)+\left(\sum_{k=1}^5 a_k-1\right)$$
except $(m-2)-2=m-4$ by Lemma \ref{lem1} (3).
And then since $$\{-2,-1,\cdots, \sum _{k=1}^5 a_k -1\}$$ contains a complete system of residues modulo $m-2$ and $\left<1,1,2,2,2\right>_m$ represents every positive integer up to $m-4$, we obtain the claim for $m \not=6$.
When $m=6$, since $\left<1,1,2\right>_6$ is universal, it is clear.

(2) It would be enough to show that 
\begin{equation} \label{2gamma}
\gamma_{m,(1,1,2,4)}=m-4
\end{equation} 
for $m>2 \sqrt{4+8C}+2$.
Throughout this proof, in order to claim (\ref{2gamma}), we show that an $m$-gonal form $$\left<1,1,2,4,a_5,\cdots, a_n\right>_m$$ which represents every positive integer up to $m-4$ represents every positive integer up to $C(m-2)$.
And then Theorem \ref{c} would yield the claim.

For a positive even integer $A$ in $[5,C]$, since $\left<1,1,2\right>_3$ is universal, we may take $x_1,x_2,x_3 \in \z$ for which
\begin{equation}\label{p3}
P_3(x_1-1)+P_3(x_2-1)+2P_3(x_3-1)=A.
\end{equation}
Since $P_3(-x)=P_3(x-1)$, by changing $x_i-1$ to $-x_i$ in (\ref{p3}) if it is necessary, we may assume that 
$$x_1+x_2+2x_3 \eq 0 \pmod 4.$$
And then for $x_4=0$ or $1$, we have that
\begin{equation} \label{o}
x_1+x_2+2x_3+4x_4 \eq 0 \pmod 8.
\end{equation}
So we may get that
\begin{equation} \label{oo}
\begin{cases}
P_m(x_1)+P_m(x_2)+2P_m(x_3)+4P_m(x_4)=A(m-2)+B \\ 
P_m(-x_1+1)+P_m(-x_2+1)+2P_m(-x_3+1)+4P_m(-x_4+1)=A(m-2)-B+8
\end{cases} \end{equation}
where $B=x_1+x_2+2x_3+4x_4 \eq 0 \pmod 8$ for $(x_1,x_2,x_3,x_4)$ satisfying (\ref{p3}) and (\ref{o}) with $x_4 \in \{0,1\}$ from
$$P_m(x)=(m-2)P_3(x-1)+x.$$
Since either $B$ or $-B+8$ is positive in (\ref{oo}), without loss of generality, let $B>0$.
For $(x_1,x_2,x_3)$ satisfying (\ref{p3}), since
$$2-\sqrt{4+8A} \le x_1+x_2+2x_3 \le 2 + \sqrt{4+8A}$$
holds,
we may see that 
$$\begin{cases}
8 \le B  \le 6 + \sqrt{4+8A}\\
2- \sqrt{4+8A} \le -B+8 \le 0.
\end{cases}$$
On the other hands, by Lemma \ref{ml} (6),
$$\left<1,1,2,4\right>_m$$
would represent $$A(m-2)+r$$
for $$r=B-7,B-6,B-5,B-4,B-3,B-2,B-1,B+1(< 4+\sqrt{16+16A})$$
 and 
$$r=(4-\sqrt{16+16A}<)-B+7,-B+9,-B+10,-B+11,-B+12,-B+13,-B+14,-B+15$$
since $A\eq 0 \pmod2$ and $B \eq 0 \pmod8$.
Overall, we may obtain that 
$$\left<1,1,2,4\right>_m$$
represents the consecutive 9 integers in $$[A(m-2)+(B-7),A(m-2)+(B+1)]$$ and the consecutive 9 integers in $$[A(m-2)+(-B+7),A(m-2)+(-B+15)].$$
By Proposition \ref{l1}, which may conclude that 
$\left<1,1,2,4,a_5,\cdots,a_n\right>_m$
represents all the integers in 
$$[A(m-2)-B+7, A(m-2)-B+7+(m-4)] \cup [A(m-2)+B-7,A(m-2)+B-7+(m-4)].$$
Which implies that $\left<1,1,2,4,a_5,\cdots,a_n\right>_m$ represents all the integers in $$[A(m-2)-1 , A(m-2)+(m-3)]$$
because  $B-7 \le-B+7+(m-4) $ from assumption.

For a positive odd integer $A$ in $[5,C]$, since $\left<1,1,2\right>_3$ is universal, we may have $x_1,x_2,x_3 \in \z$ for which
\begin{equation}\label{p3'}
P_3(x_1-1)+P_3(x_2-1)+2P_3(x_3-1)=A.
\end{equation}
Since $P_3(-x)=P_3(x-1)$, by changing $x_i-1$ to $-x_i$ in (\ref{p3'}) if it is necessary, we may assume that 
$$x_1+x_2+2x_3 \eq 0 \pmod 4.$$
And then in this case, we may take $x_4\in \{0 , 1\}$ for which
\begin{equation} \label{o'}
x_1+x_2+2x_3+4x_4 \eq 4 \pmod 8.
\end{equation}
So we may get that
\begin{equation} \label{oo'}
\begin{cases}
P_m(x_1)+P_m(x_2)+2P_m(x_3)+4P_m(x_4)=A(m-2)+B \\ 
P_m(-x_1+1)+P_m(-x_2+1)+2P_m(-x_3+1)+4P_m(-x_4+1)=A(m-2)-B+8
\end{cases} \end{equation}
where $B=x_1+x_2+2x_3+4x_4 \eq 4 \pmod 8$ for $(x_1,x_2,x_3,x_4)$ satisfying (\ref{p3'}) and (\ref{o'}) with $x_4 \in \{0,1\}$.
In this case, without loss of generality, we assume that $B \ge 4$.
When $B > 4$ one may show that $\left<1,1,2,4,a_5,\cdots,a_n\right>_m$ represents all the integers in $$[A(m-2)-1 , A(m-2)+(m-3)]$$ similarly with the above.
When $B=4$, since $\left<1,1,2,4 \right>_m$ represents every positive integer in $[A(m-2)-1, A(m-2)+11]$ by Lemma \ref{ml} (6).
We may conclude that $\left<1,1,2,4,a_5,\cdots,a_n\right>_m$ represents all the integers in $$[A(m-2)-1 , A(m-2)+(m-3)]$$ by Proposition \ref{l1}.

Since for $1 \le A \le 4$,
$$\left<1,1,2,4\right>_m$$
represents all the integers $A(m-2)+B$
where $-1 \le B \le 8$, by Proposition \ref{l1}, 
$\left<1,1,2,4,a_5,\cdots,a_n\right>_m$
would represents the positive integers less than $4(m-2)+(m-3)$, too.
This completes the proof.
\end{proof}

\vskip 1em

\section{some applications }
In this last section, we see some applications for Theorem \ref{main thm}.

In \cite{KP}, the author and collaborators considered the universal $m$-gonal forms of the type  of 
\begin{equation} \label{equ ell}
\left<\underbrace{1,1,\cdots,1,}_{(r-1)\text{-times}}r,r,\cdots,r\right>_m\end{equation}
for $2 \le r <m-4$. 
Especially, for the minimal rank $\ell_m$ for which $\left<1,1,\cdots,1\right>_m$ is universal and the minimal rank $\ell_{m,r,r-1}$ for which (\ref{equ ell}) is universal,
they claimed Theorem \ref{with B thm}.

\begin{thm} \label{with B thm}
\noindent $(1)$ For $m \not\in \{7,9\}$,
$$ \ell_m=
\begin{cases} m-4 & \text{ if } m \ge 10\\
3& \text{ if } m \in \{3,5,6\}\\
4& \text{ if } m \in \{4,8\}. \\
\end{cases}$$

\noindent $(2)$ For $7\le r <m-3$,
$$\ell_{m,r,r-1}=\ceil{\frac{m-3}{r}}+(r-2).$$

\noindent $(3)$

$$\ell_{m,2,1}=\floor{\frac{m}{2}} \text{ for } m \ge 14, $$
$$\ell_{m,3,2}=\begin{cases}m-2 & \text{ for }m\ge10 \text{ with } m \not\eq 2 \pmod 3,\\
\frac{2m-4}{3} & \text{ for }m\ge14 \text{ with } m \eq 2 \pmod 3,\\
\end{cases}$$
$$\ell_{m,4,3}=\ceil{\frac{m-2}{4}}+2 \text{ for } m \ge 62, $$
$$\ell_{m,5,4}=\ceil{\frac{m-3}{5}}+3 \text{ for } m \ge 78, $$
$$\ell_{m,6,5}=\ceil{\frac{m-3}{6}}+4 \text{ for } m \ge 93. $$
\end{thm}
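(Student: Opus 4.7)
The plan is to prove each formula in Theorem \ref{with B thm} by a matching pair of arguments: a combinatorial subset-sum lower bound, and an upper bound produced by applying Theorem \ref{main thm}. The crucial first reduction is that for every $N \in \{1,2,\dots,m-4\}$ and $m \ge 5$, any representation of $N$ by $F_m$ must use $x_i \in \{0,1\}$ for each $i$, since $P_m(x) \ge m-3 > m-4$ whenever $x \ne 0, 1$. Consequently $\left<\underbrace{1,\dots,1}_{r-1},\underbrace{r,\dots,r}_{k}\right>_m$ represents $[1,m-4]$ iff the subset sums $\{a + br : 0 \le a \le r-1,\ 0 \le b \le k\} = [0,(k+1)r-1]$ cover $[0,m-4]$, which occurs iff $k \ge \ceil{(m-3)/r} - 1$. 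This simultaneously supplies the lower bound $\ell_{m,r,r-1} \ge \ceil{(m-3)/r} + (r-2)$ (since a smaller $k$ misses $m-4$ itself) and identifies the candidate rank at which universality might first hold. The lower bound $\ell_m \ge m-4$ is the special case $r=1$: one needs $m-4$ ones to express $m-4$ as a subset sum.

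For the matching upper bound I would split into cases according to whether the first five coefficients of the minimal candidate form appear in Table \ref{t}. In part $(1)$ for $m \ge 10$, in part $(2)$ for $r \ge 7$, and in part $(3)$ for $r \in \{5,6\}$, the first five coefficients are $(1,1,1,1,1)$ or $(1,1,1,1,5)$, both in Table \ref{t}; Theorem \ref{main thm}$(1)$ then immediately upgrades ``represents $[1,m-4]$'' to ``universal,'' so the candidate rank is exactly $\ell_{m,r,r-1}$. The exceptional small-$m$ cases of part $(1)$, $m \in \{3,4,5,6,8\}$, follow from classical theorems: $m=3$ (Gauss, triangular numbers), $m=4$ (Lagrange, four squares), $m=5, 6$ (Cauchy in generalized form), and $m=8$ (known for octagonal numbers).

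For part $(3)$ with $r \in \{2,3,4\}$, the first five coefficients $(1,2,2,2,2),\ (1,1,3,3,3),\ (1,1,1,4,4)$ lie outside Table \ref{t}; indeed the filtering remarks of Section 2 exhibit concrete integers showing $\gamma_{m,(a_1,\dots,a_5)} > m-4$ in each case. Here the upper bound must be proved by a direct covering argument in the spirit of Lemma \ref{lem1}. I would pass to the appropriate class-number-one quaternary subform ($\left<1,2,2,2\right>_m$, $\left<1,1,3,3\right>_m$, or $\left<1,1,2,4\right>_m$) and use Lemma \ref{ml} to produce, for each residue class of $A(m-2)+B$ compatible with its local obstructions, a long arithmetic window $[A(m-2)-c,\, A(m-2)+m-4-c]$ of represented integers; then Proposition \ref{l1} stitches the windows together across progressions mod $m-2$. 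The thresholds $m \ge 14, 62, 78, 93$ arise precisely to ensure these windows overlap for every $N \in \N$.

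The main obstacle I expect is in the $r = 2, 3$ cases: the Lemma \ref{ml} conditions for $\left<1,2,2,2\right>_m$ and $\left<1,1,3,3\right>_m$ exclude an entire residue class (of $N \pmod 7$ or a congruence $B \pmod 4$), so before Proposition \ref{l1} can be applied one must exploit $P_m(-1)=m-3$ and $P_m(2)=m$ to shift into a representable class, or introduce a second quaternary subform to cover the exceptional residues. Organizing the case analysis so that every positive integer is captured by at least one window, while keeping the rank at the predicted value $\floor{m/2}$ (respectively $m-2$ or $(2m-4)/3$), is where essentially all of the technical work of this theorem concentrates.
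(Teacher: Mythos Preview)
The paper does not prove Theorem~\ref{with B thm}; its proof reads ``See \cite{KP}.'' So your proposal is being compared not against an argument in this paper but against a cited external result. For the portions where you invoke Theorem~\ref{main thm}(1) with $(1,1,1,1,1)$ or $(1,1,1,1,5)$---part~(1) for $m\ge 10$, part~(2), and part~(3) for $r\in\{5,6\}$---your approach is correct and in fact yields sharper ranges than Theorem~\ref{with B thm} states; this is exactly how the present paper obtains Theorem~\ref{up thm} as an \emph{improvement} of Theorem~\ref{with B thm}.

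The genuine gap is your treatment of $r\in\{2,3,4\}$ in part~(3). Your subset-sum argument gives only $\ell_{m,r,r-1}\ge \ceil{(m-3)/r}+(r-2)$, but for these $r$ the stated values are strictly larger: for $r=2$ one has $\floor{m/2}=\ceil{(m-3)/2}+1$; for $r=4$ with $m\equiv 3\pmod 4$ one has $\ceil{(m-2)/4}+2=\ceil{(m-3)/4}+3$; and for $r=3$ the discrepancy is dramatic (e.g.\ $\ell_{14,3,2}=8$ while your bound gives $5$, and for $m\not\equiv 2\pmod 3$ the stated value $m-2$ is roughly triple your bound). The missing obstructions are precisely those exhibited in the remarks of Section~2: a form $\left<1,2,\dots\right>_m$ with coefficient sum $m-4$ fails to represent $m-2$, and $\left<1,1,3,\dots\right>_m$ fails $2m-7$, so representing $[1,m-4]$ is \emph{not} sufficient for universality here. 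Your plan never identifies these, so both your lower bound and the candidate rank at which you attempt the upper bound are wrong for $r\in\{2,3,4\}$. Separately, in your upper-bound sketch $\left<1,1,2,4\right>$ is not a subform of $\left<1,1,1,4,4,\dots\right>$, and $\left<1,1,3,3\right>$ is not covered by Lemma~\ref{ml}, so the covering argument as written cannot proceed.
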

\begin{proof}
See \cite{KP}.
\end{proof}

\vskip 0.5em
In Theorem \ref{up thm}, we fill out some remaining answers in Theorem \ref{with B thm} by applying Theorem \ref{main thm}.
\vskip 0.5em

\begin{thm} \label{up thm}
\noindent $(1)$ $$ \ell_m=
\begin{cases} m-4 & \text{ if } m \ge 9\\
3& \text{ if } m \in \{3,5,6\}\\
4& \text{ if } m \in \{4,7,8\}. \\
\end{cases}$$

\noindent $(2)$ 
For $5 \le r < m-3$, we have
$$\ell_{m,r,r-1}=\ceil{\frac{m-3}{r}}+(r-2).$$

\noindent $(3)$

$$\ell_{m,2,1}=\floor{\frac{m}{2}} \text{ for } m \ge 12, $$
$$\ell_{m,3,2}=\begin{cases}m-2 & \text{ for }m\ge10 \text{ with } m \not\eq 2 \pmod 3,\\
\frac{2m-4}{3} & \text{ for }m\ge14 \text{ with } m \eq 2 \pmod 3,\\
\end{cases}$$
$$\ell_{m,4,3}=\ceil{\frac{m-2}{4}}+2 \text{ for } m \ge 62.$$
\end{thm}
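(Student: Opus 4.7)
The plan is to use Theorem \ref{main thm} as the main engine: for an $m$-gonal form whose first five coefficients $(a_1,\ldots,a_5)$ lie in Table \ref{t}, universality is equivalent to representing $1,2,\ldots,m-4$. The complementary ingredient is a small-size argument: for any $N \le m-4$, a representation $N = \sum a_i P_m(x_i)$ forces every $x_i \in \{0,1\}$, since $a_i P_m(-1) \ge m-3 > m-4$ and $a_i P_m(\pm 2) \ge m > m-4$. Hence $N$ is represented if and only if $N$ is a subset sum of $\{a_1,\ldots,a_n\}$.

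For part (2), let $F=\left<\underbrace{1,\ldots,1}_{r-1},\underbrace{r,\ldots,r}_{s}\right>_m$ with $r \in \{5,6\}$. Its first five coefficients are $(1,1,1,1,5) \in$ Table \ref{t} when $r=5$ and $(1,1,1,1,1) \in$ Table \ref{t} when $r=6$. When $m-4$ exceeds the sum of those five entries, the first clause of Theorem \ref{main thm} (1) gives universality iff $F$ represents $\{1,\ldots,m-4\}$, i.e., iff the subset sums $\{j+kr : 0 \le j \le r-1,\ 0 \le k \le s\}$ cover $\{1,\ldots,m-4\}$. A direct check shows this is equivalent to $s \ge \floor{(m-4)/r}$, yielding $\ell_{m,r,r-1} = (r-1) + \floor{(m-4)/r} = (r-2) + \ceil{(m-3)/r}$. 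In the residual small-$m$ range (notably $9 \le m \le 13$ for $r=5$) where $m-4 \le \sum_{i=1}^5 a_i$, the second clause of Theorem \ref{main thm} (1) instead asserts universality of $\left<1,1,1,1,5\right>_m$ directly, matching the formula's prediction of $\ell_{m,5,4}=5$ in that range.

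For part (1), the new value $\ell_9=5$ follows identically: $(1,1,1,1,1) \in$ Table \ref{t} with $\sum a_i = 5 = m-4$, so the second clause of Theorem \ref{main thm} (1) gives universality of $\left<1,1,1,1,1\right>_9$; the matching lower bound $\ell_9 \ge 5$ comes from applying the subset-sum reduction to $N = m-4 = 5$. For $\ell_7 = 4$, since $\left<1,1,1,1\right>_7$ has only four coefficients, Theorem \ref{main thm} does not apply directly, and I would instead use Lemma \ref{ml} (1): writing each positive $N$ as $5A+B$, one argues that for some admissible choice of $B$ one has $8A + 4B - B^2 \ne 4^l(8k+7)$ and $\ge 0$. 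The matching non-universality of $\left<1,1,1\right>_7$ is witnessed by $10$, since the only generalized heptagonal numbers $\le 10$ are $\{0,1,4,7\}$ and no sum of three of them equals $10$.

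The main obstacle is part (3), the improvement from $m \ge 14$ to $m \ge 12$ for $\ell_{m,2,1}$: the relevant form $\left<1,2,2,2,2,2\right>_m$ has first five coefficients $(1,2,2,2,2)$, which is \emph{not} in Table \ref{t}, so Theorem \ref{main thm} cannot be invoked. For the two new values $m \in \{12,13\}$, I would verify universality by hand, using Lemma \ref{ml} (7) to control the subform $\left<1,2,2,2\right>_m$ and adjoining the remaining $2P_m(x_i)$ terms to cover the missing residue classes modulo $m-2$. This case analysis is finite but represents the main technical bookkeeping of the proof, since it falls outside the Table \ref{t} machinery.
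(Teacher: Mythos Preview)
Your proposal is correct and follows essentially the same architecture as the paper: parts (1) and (2) are deduced from Theorem \ref{main thm} (1) exactly as the paper does (including the witness $10$ for $\ell_7\ge 4$ and the use of Lemma \ref{ml} (1) to finish $\ell_7=4$), and you correctly isolate that the only new content in part (3) is $m\in\{12,13\}$ for $\ell_{m,2,1}$.

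The one place your route diverges is the verification that $\left<1,2,2,2,2,2\right>_m$ is universal for $m=12,13$. You propose to work with the subform $\left<1,2,2,2\right>_m$ via Lemma \ref{ml} (7) and then adjoin the two remaining $2P_m$-terms; this is workable but the exceptional condition $14A+7B-B^2\neq 2\cdot 4^l(8k+7)$ couples $A$ and $B$, so the residue-covering bookkeeping is genuinely messier. The paper instead uses a scaling trick you may find cleaner: it views $\left<2,2,2,2,2\right>_m$ as $2\cdot\left<1,1,1,1,1\right>_m$ and applies Lemma \ref{lem1} (1) for $(1,1,1,1,1)$ to get that $\left<2,2,2,2,2\right>_m$ represents every $2A(m-2)+2B$ with $B\in\{-1,0,\ldots,5\}$, then fills in the odd residues and the shifted even residues by taking $x_1\in\{1,-1,2\}$ in the remaining $P_m(x_1)$-term. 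This yields all $A(m-2)+B$ with $B\in\{-1,0,\ldots,9\}$ except $m-2$, which is handled directly, and $\{-1,\ldots,9\}$ covers a full residue system mod $m-2$ precisely when $m-2\le 11$. Your approach buys you one fewer dependency (you avoid Lemma \ref{lem1}), while the paper's buys a uniform, exception-free residue argument.
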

\begin{proof}
(1)
In virtue of Theorem \ref{with B thm}, it would be enough to prove only for $m=7,9$.
First consider $m=7$.
Note that $\left<1,1,1\right>_7$ does not represent $10$.
So we have that $\ell_7\ge 4$.
By Lemma \ref{ml} (1), $\left<1,1,1,1\right>_m$ represents the integers
$$\begin{cases}
A(m-2)-3 & \text{ for } A \ge 3,\\
A(m-2)-1 & \text{ for } A \ge 1,\\
A(m-2)+1 & \text{ for } A \ge 0,\\
A(m-2)+3 & \text{ for } A \ge 0,\\
A(m-2)+5 & \text{ for } A \ge 1.\\
\end{cases}$$
Since $\{-3,-1,1,3,5\}$ form a complete system of residues modulo $m-2=5$ and 
$$ 2(m-2)-3, \ (m-2)-3, \ 0(m-2)+5 $$
are immediately represented by $\left<1,1,1,1\right>_7$ for $m-2=5$, we have that $\ell_7=4$.

For $m \ge 9$, $\ell_m=m-4$ directly follows from Theorem \ref{main thm} (1) for $(a_1,a_2,a_3,a_4,a_5)=(1,1,1,1,1)$.

(2) This directly follows from Theorem \ref{main thm} (1) for $(a_1,a_2,a_3,a_4,a_5)=(1,1,1,1,1)$ and $(1,1,1,1,5)$.

(3) In virtue of the Theorem \ref{with B thm}, it would be enough to show that
$$\ell_{m,2,1}=\floor{\frac{m}{2}} \text{ for } m = 12, 13 $$
One may notice that in order for $\left<1,2,\cdots,2\right>_m$
to represent $m-2$, its rank should be greater than or aqual to $\floor{\frac{m}{2}}$. 
So we have that $\ell_{m,2,1} \ge \floor{\frac{m}{2}}$.
By Lemma \ref{lem1} (1) for $(a_1,a_2,a_3,a_4,a_5)=(1,1,1,1,1)$, we obtain that
$$\left<2,2,2,2,2\right>_m$$
represents every positive integer of the form of 
$$2A(m-2)+2B$$
where $B \in \{-1,0,1,2,3,4,5\}$.
So we may see that
$$P_m(x_1)+2P_m(x_2)+2P_m(x_3)+2P_m(x_4)+2P_m(x_5)+2P_m(x_6)$$
represents every positive integer of the form of
$$2A(m-2)+(2B+1)$$
with $x_1=1$ where $B \in \{-1,0,1,2,3,4,5\}$,
$$(2A+1)(m-2)+(2B+2)$$
with $x_1=2$ where $B \in \{-1,0,1,2,3,4,5\}$ except $m-2$, and
$$(2A+1)(m-2)+(2B-1)$$
with $x_1=-1$ where $B \in \{-1,0,1,2,3,4,5\}$ except $(m-2)-3$.
Oversall, we may see that
$$P_m(x_1)+2P_m(x_2)+2P_m(x_3)+2P_m(x_4)+2P_m(x_5)+2P_m(x_6)$$
represents every positive integer of the form of 
$$A(m-2)+B$$
where $B \in \{-1,0,1,\cdots, 8,9\}$ except $m-2$.
Since $m-2$ is immediately represented by $\left<1,2,2,2,2,2\right>_m$ and $\{-1,0,1,\cdots,9\}$ contains a complete system of residues modulo $m-2$, we may obtain that $P_m(x_1)+2P_m(x_2)+2P_m(x_3)+2P_m(x_4)+2P_m(x_5)+2P_m(x_6)$ is universal.

\end{proof}
 
\begin{thm} \label{fib}
For $m \ge 3$, having its coefficients as Fibonacci sequence $\{F_i\}$
$$\left<1,1,2,3,5,8,\cdots,F_n\right>_m$$
with $n \ge 5$ and $F_1+\cdots+F_n \ge m-4 $ is an universal $m$-gonal form.
\end{thm}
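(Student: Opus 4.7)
The plan is to reduce the claim to Theorem \ref{main thm} (1) applied to the tuple $(F_1, F_2, F_3, F_4, F_5) = (1, 1, 2, 3, 5)$, which appears in Table \ref{t}. Theorem \ref{main thm} (1) splits the argument into two regimes. If $m - 4 \le 1 + 1 + 2 + 3 + 5 = 12$, then $\left<1, 1, 2, 3, 5\right>_m$ is already universal by Theorem \ref{main thm} (1), and since $n \ge 5$ the Fibonacci form contains this as a subform (set the variables $x_6, \ldots, x_n$ to $0$), so it too is universal. If $m - 4 > 12$, then $\gamma_{m,(1,1,2,3,5)} = m - 4$, so it suffices to verify that $\left<F_1, \ldots, F_n\right>_m$ represents every positive integer from $1$ to $m - 4$.

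For representability in the second regime, I would invoke the standard Fibonacci subset-sum property: every integer $k$ with $1 \le k \le F_1 + \cdots + F_n$ can be written as $k = \sum_{i \in S} F_i$ for some $S \subseteq \{1, \ldots, n\}$. A short induction on $n$ delivers this, using the inequality $F_n \le F_1 + \cdots + F_{n-1}$ valid for $n \ge 2$: in the inductive step, either $k \le F_1 + \cdots + F_{n-1}$ (apply the hypothesis directly) or $F_1 + \cdots + F_{n-1} < k \le F_1 + \cdots + F_n$, in which case $1 \le k - F_n \le F_1 + \cdots + F_{n-1}$, so apply the hypothesis to $k - F_n$ and adjoin the index $n$.

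Granting the subset-sum lemma, any $k$ with $1 \le k \le m - 4$ satisfies $k \le F_1 + \cdots + F_n$ (by the hypothesis $F_1 + \cdots + F_n \ge m - 4$), and so admits a subset representation $k = \sum_{i \in S} F_i$. Setting $x_i = 1$ for $i \in S$ and $x_i = 0$ otherwise yields $\sum_{i=1}^n F_i P_m(x_i) = k$, since $P_m(0) = 0$ and $P_m(1) = 1$. Hence the Fibonacci form represents every positive integer up to $m - 4$, and Theorem \ref{main thm} (1) concludes universality.

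I do not anticipate any serious obstacle: the theorem is essentially a corollary of Theorem \ref{main thm}, and the only additional input is the elementary Fibonacci subset-sum lemma together with the triviality $P_m(1) = 1$. If anything, the mildly delicate point is the interface with Theorem \ref{main thm} in the small-$m$ regime, where one must observe that enlarging the form by further Fibonacci terms preserves universality; but this is immediate by sending the extra variables to $0$.
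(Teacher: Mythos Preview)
Your proposal is correct and follows exactly the paper's approach: the paper's proof is the single sentence ``By Theorem~\ref{main thm} (1) for $(a_1,a_2,a_3,a_4,a_5)=(1,1,2,3,5)$, it is clear,'' and what you have written simply unpacks that sentence, making explicit the two regimes of Theorem~\ref{main thm} (1) and the elementary subset-sum property of the Fibonacci sequence (which is a special case of the escalation condition~(\ref{esc})) needed to verify representability up to $m-4$.
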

\begin{proof}
By Theorem \ref{main thm} (1)  for $(a_1,a_2,a_3,a_4,a_5)=(1,1,2,3,5)$, it is clear.
\end{proof}

\vskip 1em

\begin{rmk}
Similarly with Thorem \ref{up thm} and Theorem \ref{fib}, there would be a lot of chances to determine the optimal rank of specific type of $m$-gonal forms to be universal by applying Thoerem \ref{main thm}.
\end{rmk}

\end{document}